\newcommand{\ZZ}{\mathbb{Z}}
\newcommand{\QQ}{\mathbb{Q}}
\newcommand{\V}{\mathrm{V}}
\newcommand{\U}{\mathrm{U}}
\newcommand{\cc}[1]{\ensuremath{{\texttt{#1}}}}
\definecolor{LinkColor}{rgb}{0,0,0} 
\theoremstyle{plain}
\newtheorem{theorem}{Theorem}[section]
\newtheorem{corollary}[theorem]{Corollary}
\newtheorem{proposition}[theorem]{Proposition}
\theoremstyle{definition}
\newtheorem{definition}[theorem]{Definition}
\newtheorem*{zc*}{Zassenhaus Conjecture (ZC)}
\newtheorem*{pq*}{Prime Graph Question (PQ)}
\theoremstyle{remark}
\newtheorem{remark}[theorem]{Remark}
\newtheorem{example}[theorem]{Example}
\begin{document}

\title[HeLP-package]{HeLP \\ A \textsf{GAP}-package for torsion units in integral group rings}
\author{Andreas B\"achle}
\address{Vakgroep Wiskunde, Vrije Universiteit Brussel, Pleinlaan 2, 1050 Brussels, Belgium}
\email{\href{mailto:abachle@vub.ac.be}{abachle@vub.ac.be}}
\author{Leo Margolis}
\address{Fachbereich Mathematik, Universit\"{a}t Stuttgart, Pfaffenwaldring 57, 70569 Stuttgart, Germany}
\email{\href{mailto:leo.margolis@mathematik.uni-stuttgart.de}{leo.margolis@mathematik.uni-stuttgart.de}}
\date{\today}
\thanks{The first author is supported by the Research Foundation Flanders (FWO - Vlaanderen). \\ \indent This project was partly supported by the DFG priority program SPP 1489.}
\subjclass[2010] {16Z05, 16U60, 16S34, 20C05} 
\keywords{integral group ring, torsion units, Zassenhaus Conjecture, Prime Graph Question, computer algebra, GAP}

\begin{abstract} We briefly summarize the background of the HeLP-method for torsion units in group rings and present some functionality of a \textsf{GAP}-package implementing it. \end{abstract}

\maketitle

\section{The Zassenhaus Conjecture and the Prime Graph Question} 

Considering the integral group ring $\ZZ G$ of a finite group $G$, one question that strikes the eye is: ``How does the unit group $\U(\ZZ G)$ look like?'', in particular which are the \emph{torsion units}, i.e.\ the units of finite order. Clearly, there are the so-called \emph{trivial units} $\pm g$ for $g \in G$. Already in G.~Higman's PhD-thesis \cite{HigmanThesis} it was proved that all the torsion units are of this form, provided $G$ is abelian. As we are not interested in the torsion coming solely from the ring, but rather in the torsion coming from the group-ring-interplay, we consider the group of \emph{normalized units} $\V(\ZZ G)$, i.e.\ the units mapping to $1$ under the augmentation homomorphism \[ \varepsilon \colon \ZZ G \to \ZZ \colon \sum_{g \in G} u_g g \mapsto \sum_{g \in G} u_g. \]
Then $\U(\ZZ G) = \pm \V(\ZZ G)$.

In the non-commutative case, there are in general of course more torsion units than the trivial ones, e.g.\ conjugates of group elements by units of $\QQ G$ which end up in $\ZZ G$ again.  H.J.~Zassenhaus conjectured more than 40 years ago that these are all the torsion units:

\begin{zc*}[\cite{Zassenhaus}] Let $G$ be a finite group and $u$ a torsion unit in $\V(\ZZ G)$. Then there exists a unit $x$ in $\mathbb{Q}G$ such that $x^{-1}ux = g$ for some $g \in G$.
\end{zc*}
\noindent Elements $u, v \in \ZZ G$ which are conjugate by a unit $x \in \QQ G$ are called \emph{rationally conjugate}, denoted by $u \sim_{\QQ G} v$. The Zassenhaus Conjecture is nowadays one of the main open questions in the area of integral group rings. A highlight was certainly Weiss' proof of this conjecture for nilpotent groups \cite{Weiss88,Weiss91}.

As a first step towards the Zassenhaus Conjecture W. Kimmerle formulated a much weaker version of it which has found quite some attention since:

\begin{pq*}[\cite{KimmiPQ}] Let $G$ be a finite group. Let $p$ and $q$ be different primes such that $\mathrm{V}(\mathbb{Z}G)$ contains an element of order $pq$. Does then $G$ posses an element of order $pq$?\end{pq*}
In other words this question asks whether $G$ and $\mathrm{V}(\mathbb{Z}G)$ have the same prime graph.

A method to attack these questions, nowadays known as the HeLP-method, was introduced by Luthar and Passi in \cite{LP89} and later extended by Hertweck in \cite{HertweckBrauer}. The name \emph{HeLP} (\emph{He}rtweck\emph{L}uthar\emph{P}assi) is due to Alexander Konovalov. The method can be applied algorithmically to a concrete group or, if one has generic characters at hand, a series of groups. 

The present note presents a \textsf{GAP}-package implementing this method \cite{HeLP}. There are two main motivations for this program: Making the method available to researchers working in the field and not having an own implementation and giving readers of papers using the method the opportunity to check the results obtained by the method. We describe the method in \cref{section:help-constraints} and discuss several aspects of our implementation in \cref{section:wagner-test,section:implementation}. 

\section{The HeLP-Constraints}\label{section:help-constraints}

Let $G$ always be a finite group.

The possible orders of torsion units in $\ZZ G$ are restricted by the following proposition.

\begin{proposition}\label{prop:orders} Let $u \in \V(\ZZ G)$ be a torsion unit.
\begin{enumerate}[label=\alph*)]
\item The order of $u$ divides the exponent of $G$. \hfill Cohn, Livingstone {\cite[Corollary 4.1]{CL}}
\item If $G$ is solvable, then the order of $u$ coincides with the order of an element of $G$. \\ \phantom{v} \hfill Hertweck {\cite[Theorem]{HertweckSolvable}} 
\end{enumerate} 
\end{proposition}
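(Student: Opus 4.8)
\emph{Proof proposal.} These are the cited theorems of Cohn--Livingstone (part (a)) and Hertweck (part (b)), proved by genuinely different means; what follows is only a plan, with the bulk of the difficulty sitting in part (b).

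\emph{Part (a).} The plan is to reduce to prime-power order and then to exhibit a group element of that order. Write $n$ for the order of $u$. Since $\exp(G)=\operatorname{lcm}_{g\in G}\operatorname{ord}(g)$, the divisibility $n\mid\exp(G)$ amounts to the assertion that for each prime $p$, writing $p^{a}\,\|\,n$, the group $G$ contains an element of order $p^{a}$; and replacing $u$ by the torsion unit $u^{\,n/p^{a}}$ (which has order exactly $p^{a}$) reduces us to the case $n=p^{a}$. Now I would embed $\ZZ G\hookrightarrow\QQ G$ and pass to the Wedderburn decomposition of $\QQ G$: the image of $u$ in at least one simple component still has order $p^{a}$, so there is an irreducible complex representation $D$ of $G$, with character $\chi$, such that $D(u)$ has order exactly $p^{a}$, and hence a primitive $p^{a}$-th root of unity occurs among the eigenvalues of $D(u)$. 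The multiplicity of such an eigenvalue is a nonnegative integer expressible through the values $\chi(u^{j})$ for $j\mid p^{a}$, and each of these equals $\sum_{g}\varepsilon_{g}(u^{j})\,\chi(g)$, where the $\varepsilon_{g}$ are the (integral) partial augmentations of the HeLP-method and $\varepsilon_{1}(u^{j})=0$ whenever $u^{j}\neq1$ by the theorem of Berman and Higman. Tracking which group elements can force a primitive $p^{a}$-th root of unity into this multiplicity then produces an element of order $p^{a}$ in $G$; this is in essence the original argument of Cohn and Livingstone, who carry it out by reduction modulo suitable primes rather than through complex characters.

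\emph{Part (b).} This is Hertweck's theorem and is considerably harder; I would set it up as an induction on $|G|$. Part (a) already gives that the order $n$ of $u$ divides $\exp(G)$, so the remaining task is to exclude those $n$ that are not the order of any element of $G$, the simplest obstruction being $n=pq$ for distinct primes $p,q$ with $G$ possessing elements of orders $p$ and $q$ but none of order $pq$. Using solvability I would take a minimal normal subgroup $N$ of $G$ — necessarily an elementary abelian $q$-group — and pass to the image $\bar u\in\V(\ZZ[G/N])$: by induction $\operatorname{ord}(\bar u)$ is an element order of $G/N$, and if $\operatorname{ord}(\bar u)<n$ then $u^{\operatorname{ord}(\bar u)}$ lies in the kernel of $\ZZ G\to\ZZ[G/N]$ and is a $q$-element of $\V(\ZZ G)$, to be handled separately. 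Iterating such reductions, using Clifford theory to descend to sections whose structure one can control, and — crucially — invoking Hertweck's Brauer-character strengthening of the HeLP-method, which adjoins constraints modulo each prime dividing $|G|$ to the ordinary ones, one closes the induction. The hard part, and the main obstacle of the whole proposition, is exactly this last step: in the critical configurations the ordinary HeLP-constraints do not suffice, and one needs the modular refinement together with a careful analysis of the $p$- and $q$-parts of $u$ and of the relevant defect groups to rule out a torsion unit of ``mixed'' order not realised in $G$.
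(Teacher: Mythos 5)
First, a point of reference: the paper does not prove this proposition at all — both parts are quoted from the literature (Cohn--Livingstone and Hertweck), so your proposal can only be measured against those original arguments, and as a proof it has genuine gaps in both parts. In part (a), the reduction to $n=p^a$ and the passage to a simple component where $D(u)$ still has order $p^a$ are fine, but the decisive step is missing: from the eigenvalue-multiplicity formula together with $\chi(u^j)=\sum_x\varepsilon_x(u^j)\chi(x)$ and Berman--Higman alone, nothing controls \emph{which} conjugacy classes contribute, so ``tracking which group elements can force a primitive $p^a$-th root of unity into this multiplicity'' is a hope, not an argument. What actually carries the Cohn--Livingstone result is a congruence modulo $p$ on partial augmentations, of exactly the kind the paper proves in its Section~3 (the Wagner-type congruence $\sum_{x^G,\,x^{p^j}\sim s}\varepsilon_x(u)\equiv\varepsilon_s(u^{p^j})\bmod p$): applying it with $s=1$ and exponents $p^a$ and $p^{a-1}$, and using $\varepsilon_1(u^{p^{a-1}})=0$, one gets $\sum_{x:\,x^{p^a}=1}\varepsilon_x(u)\equiv 1$ and $\sum_{x:\,x^{p^{a-1}}=1}\varepsilon_x(u)\equiv 0 \bmod p$, which is contradictory unless $G$ has an element of order exactly $p^a$. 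Without such a congruence (or Hertweck's much later vanishing $\varepsilon_x(u)=0$ for $o(x)\nmid o(u)$, which you do not invoke and which postdates Cohn--Livingstone) your sketch does not reach the conclusion.

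For part (b) the plan does not close, and it also misdescribes where the work lies. The assertion that a torsion unit in the kernel of $\ZZ G\to\ZZ[G/N]$, for $N$ a normal $q$-subgroup, is a $q$-element is itself the key lemma of Hertweck's paper, proved by $q$-adic/modular methods; it is not a formality you may assume while ``handling it separately''. Even granting it, you only know that $G/N$ has an element of order $m=\operatorname{ord}(\bar u)$ and that $n/m$ is a $q$-power; it remains to produce an element of $G$ of order $n$, and this is precisely where solvability (Hall subgroups, coprime action, the Fitting subgroup) must be exploited — your sketch never does this. Finally, the closing appeal to ``Hertweck's Brauer-character strengthening of the HeLP-method'' in ``critical configurations'' is not how the solvable theorem is proved and, as written, simply defers the entire difficulty: Hertweck's argument is a short structural induction resting on the kernel lemma above, not a HeLP/Brauer-character computation. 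So your proposal is a reasonable map of the terrain, but in both parts it stops exactly at the points where the cited papers do their work.
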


\begin{definition} Let $u = \sum_{g \in G} u_g g \in \ZZ G$, $x \in G$ and denote by $x^G$ its conjugacy class. Then \[ \varepsilon_x(u) = \sum_{g \in x^G} u_g \] is called the \emph{partial augmentation of $u$ with respect to $x$} (or rather the conjugacy class of $x$). \end{definition}

The following proposition connects (ZC) to partial augmentations.

\begin{proposition}[Marciniak, Ritter, Sehgal, Weiss {\cite[Theorem 2.5]{MRSW}}] Let $u, v \in \V(\ZZ G)$ be torsion units of order $k$. Then $u \sim_{\QQ G} v$ if and only if $\varepsilon_x(u^d) = \varepsilon_x(v^d)$ for all divisors $d$ of $k$ and all $x \in G$. Moreover, $u$ is rationally conjugate to a group element if and only if $\varepsilon_x(u^d) \geq 0$ for all divisors $d$ of $k$ and all $x \in G$. \end{proposition}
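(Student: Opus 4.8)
The plan is to express everything through character values, using that $\QQ G$ is semisimple (Maschke) and that every irreducible character $\chi$, being a class function, satisfies $\chi(w)=\sum_{x}\varepsilon_x(w)\chi(x)$ for $w\in\ZZ G$, the sum running over a set of conjugacy class representatives $x$. Since the character table of $G$ is an invertible matrix, the irreducible characters separate partial augmentations: for $w,w'\in\ZZ G$ one has $\varepsilon_x(w)=\varepsilon_x(w')$ for all $x$ if and only if $\chi(w)=\chi(w')$ for every irreducible $\chi$. This already settles the forward implications. If $u\sim_{\QQ G}v$, then $u^{d}\sim_{\QQ G}v^{d}$ for every $d$, conjugate elements of $\QQ G$ have equal character values, so $\chi(u^{d})=\chi(v^{d})$ for all $\chi$ and all $d\mid k$, whence $\varepsilon_x(u^{d})=\varepsilon_x(v^{d})$; and if $u\sim_{\QQ G}g$ with $g\in G$, then $\varepsilon_x(u^{d})=\varepsilon_x(g^{d})\geq 0$ because $\varepsilon_x(g^{d})\in\{0,1\}$.

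For the converse of the first statement I would argue module-theoretically. Let $R=\QQ[t]/(t^{k}-1)\otimes_{\QQ}\QQ G^{\mathrm{op}}$, a finite-dimensional semisimple $\QQ$-algebra, and let $M_u$ be $\QQ G$ turned into a left $R$-module by letting $t$ act as left multiplication by $u$ and $\QQ G^{\mathrm{op}}$ as right multiplication by $\QQ G$ (this is well defined since $u^{k}=1$); define $M_v$ analogously. A direct computation gives the trace of $t^{i}\otimes g^{\mathrm{op}}$ on $\QQ G$, i.e.\ of $m\mapsto u^{i}mg$, as $|C_G(g)|\cdot\varepsilon_{g^{-1}}(u^{i})$, and similarly for $v$. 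As the elements $t^{i}\otimes g^{\mathrm{op}}$ span $R$ and a module over a finite-dimensional semisimple algebra is determined up to isomorphism by its character, the hypothesis $\varepsilon_x(u^{d})=\varepsilon_x(v^{d})$ for all $d\mid k$ and all $x$ — which by the Galois action of $(\ZZ/k)^{\times}$ on character values propagates to all powers — yields $M_u\cong M_v$. Any $R$-isomorphism $\phi\colon M_u\to M_v$ respects right multiplication by $\QQ G$, hence is left multiplication by the unit $x=\phi(1)\in\U(\QQ G)$; $t$-linearity of $\phi$ then gives $xu=vx$, i.e.\ $u\sim_{\QQ G}v$. (Over $\mathbb{C}$ the same argument gives $u\sim_{\mathbb{C}G}v$, so for torsion units rational and complex conjugacy coincide.)

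For the converse of the second statement, assume $\varepsilon_x(u^{d})\geq 0$ for all $d\mid k$ and all $x$. From $\sum_x\varepsilon_x(u^{d})=\varepsilon(u^{d})=1$ and integrality, each vector $(\varepsilon_x(u^{d}))_x$ is the indicator function of a single conjugacy class; let $g\in G$ represent the class occurring at $d=1$. By the first statement it now suffices to prove $\varepsilon_x(u^{d})=\varepsilon_x(g^{d})$ for all $d\mid k$ and all $x$, i.e.\ that the class attached to $u^{d}$ is the class of $g^{d}$. Each proper power $u^{d}$ again satisfies the hypothesis, now at order $k/d$, which lets the already-proven instances be fed back in; but a plain induction on $k$ fails to close, since it only matches $u^{d}$ with \emph{some} group element without identifying it with $g^{d}$.

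Closing this gap is where the real work lies, and I expect it to be the main obstacle. One must show that in each irreducible representation $u$ and $g$ have the same eigenvalue multiplicities, not merely the same trace (which is all the case $d=1$ provides). I would extract this by exploiting positivity of the partial augmentations at the prime-power divisors of $k$, together with the identities $\chi(u^{d})=\sum_x\varepsilon_x(u^{d})\chi(x)$ and the Berman--Higman fact that $\varepsilon_1(u^{m})=0$ whenever $u^{m}\neq 1$, proceeding one prime at a time until the class attached to $u^{d}$ is forced to be that of $g^{d}$; the first statement then yields $u\sim_{\QQ G}g$.
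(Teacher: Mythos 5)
The paper itself gives no proof of this proposition (it is quoted from Marciniak--Ritter--Sehgal--Weiss), so your attempt has to stand on its own. The first statement you do prove correctly, and by what is essentially the standard route: the forward direction via invertibility of the character table, and the converse via the bimodule $M_u$ over $\QQ[t]/(t^k-1)\otimes\QQ G^{\mathrm{op}}\cong\QQ[C_k\times G^{\mathrm{op}}]$, the trace computation $\operatorname{tr}(m\mapsto u^i m g)=|C_G(g)|\,\varepsilon_{g^{-1}}(u^i)$, the Galois propagation from divisors $d\mid k$ to arbitrary exponents, and the observation that an $R$-isomorphism $M_u\to M_v$ is left multiplication by a unit of $\QQ G$ conjugating $u$ to $v$. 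All of these steps are sound (semisimplicity of $R$ follows from Maschke in characteristic $0$, and character equality determines modules there), and the forward half of the second statement is immediate as you say.

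The backward half of the second statement, however, is left genuinely open, and you say so yourself. Nonnegativity plus augmentation $1$ makes each vector $(\varepsilon_x(u^d))_x$ the indicator of a single class $c(d)$, but the entire content of this direction is that these classes are power-compatible, i.e.\ $c(d)=\{g^d: g\in c(1)\}$, so that the first statement can be invoked with $v=g$. Your sketch does not supply an argument for this: Berman--Higman only identifies when $c(d)$ is the class of $1$, the Galois action only links $c(i)$ and $c(j)$ when $i$ and $j$ generate the same subgroup of $\ZZ/k\ZZ$, and an induction on $k$ (as you note) only shows that $u^p$ is rationally conjugate to \emph{some} group element, with no mechanism forcing that element to lie in $c(1)^p$ rather than in an unrelated class of elements of order $k/p$. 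The proposed remedy --- ``exploiting positivity at prime-power divisors, one prime at a time'' --- is a hope, not a proof: nothing in the sketch explains why the eigenvalue multiplicities of $D(u)$ must agree with those of $D(g)$ for a single $g$ chosen independently of the representation, which is exactly the hard point and exactly what the cited Theorem~2.5 of MRSW establishes. So: first statement and the easy direction of the second are fine; the crucial implication ``all partial augmentations of all powers nonnegative $\Rightarrow$ rationally conjugate to a group element'' is not proved.
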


Certain partial augmentations vanish a priori:

\begin{proposition}\label{prop:pas_0} Let $u \in \V(\ZZ G)$ be a torsion unit and $x \in G$.
\begin{enumerate}[label=\alph*)]
 \item \label{prop:BH} If $o(u) \not= 1$, then $\varepsilon_1(u) = 0$. \hfill Berman, Higman \cite[Proposition (1.4)]{SehgalBook2}
 \item \label{prop:Hert_pas0} If $o(x) \nmid o(u)$, then $\varepsilon_x(u) = 0$. \hfill Hertweck \cite[Theorem 2.3]{HertweckBrauer}
\end{enumerate}
\end{proposition}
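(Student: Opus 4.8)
\emph{Part (a)} is the classical Berman--Higman argument, and I would reproduce it in full. Let $\rho$ denote the left regular representation of $\QQ G$ acting on $\QQ G$ itself; then $\operatorname{tr}\rho(g)$ equals $|G|$ for $g=1$ and $0$ for every $1\neq g\in G$, so applying $\rho$ $\QQ$-linearly to $u=\sum_{g\in G}u_{g}g$ gives $\operatorname{tr}\rho(u)=|G|\,\varepsilon_{1}(u)$. On the other hand $u$ has finite order, so over $\mathbb{C}$ the matrix $\rho(u)$ is diagonalisable with all eigenvalues roots of unity; hence $|G|\,|\varepsilon_{1}(u)|=|\operatorname{tr}\rho(u)|\le|G|$ and $\varepsilon_{1}(u)\in\{-1,0,1\}$. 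If $\varepsilon_{1}(u)=\pm1$, equality holds in the triangle inequality, which forces all eigenvalues of $\rho(u)$ to coincide; then $\rho(u)$ is a scalar matrix, evaluation at the basis vector $1\in\QQ G$ shows $u$ is a scalar in $\mathbb{C}G$, necessarily $u=\pm1$, and $\varepsilon(u)=1$ then gives $u=1$, contradicting $o(u)\neq1$. Hence $\varepsilon_{1}(u)=0$; this is \cite[Proposition~(1.4)]{SehgalBook2}.

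\emph{For part (b)} I would first reduce to a statement about a single prime. The condition $o(x)\nmid o(u)$ means precisely that there is a prime $p$ with $v_{p}(o(x))>v_{p}(o(u))=:a$, where $v_{p}$ is the $p$-adic valuation; so it suffices to fix such a $p$ and prove that $\varepsilon_{x}(u)=0$ whenever the $p$-part of $x$ has order strictly larger than $p^{a}$. The model case is $a=0$, which says that a torsion unit of order prime to $p$ has vanishing partial augmentations on all $p$-singular conjugacy classes; the general case is the same phenomenon relative to the $p^{a}$-torsion part of $x$ rather than its $p'$-part.

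\emph{The core of (b)} is Hertweck's theorem, and I would invoke its proof rather than redo it, but the mechanism is the following. Work over $\ZZ_{p}$, or over the modular group algebra $FG$ with $F=\overline{\mathbb{F}_{p}}$. A suitable $p$-power of $u$ is $p$-regular, and $p$-regular torsion units behave semisimply: over $\ZZ_{p}$ such a unit generates a maximal order, so the commuting left- and right-multiplication actions on $\ZZ_{p}G$ split into summands on which $u$ acts through a root of unity, and over $FG$ the image $\bar u$ has a multiplicative Jordan decomposition $\bar u=\bar u_{s}\bar u_{u}$ whose semisimple factor $\bar u_{s}$ is detected by the $p$-modular Brauer characters of $G$. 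Tracing the partial augmentations $\varepsilon_{y}(u)$ through these decompositions --- after grouping conjugacy classes into $p$-sections --- one finds that they feed into the formulas for the Brauer-character values of $\bar u_{s}$; since $\bar u_{s}$ is $p$-regular it is blind to the part of $y$ of large $p$-power order, and disentangling the identities forces $\varepsilon_{y}(u)=0$ for each offending $y$. The main obstacle, which is the technical heart of \cite[Theorem~2.3]{HertweckBrauer}, is to make this traceback rigorous: one must control the reductions $\ZZ G\to\ZZ_{p}G\to FG$ and the lifts through a complete valuation ring, keep the prime $p$ out of the denominators appearing in the character formulas, and account for the unipotent factor $\bar u_{u}$ (which is exactly what obstructs working with $\ZZ_{p}\langle u\rangle$ directly when $p\mid o(u)$).
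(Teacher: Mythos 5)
The paper offers no proof of this proposition---both parts are simply attributed to the literature---and your treatment is in line with that: your part (a) is the standard Berman--Higman regular-representation argument and is correct in every step (trace equals $|G|\varepsilon_1(u)$, root-of-unity eigenvalues, equality in the triangle inequality forcing a scalar, and normalization forcing $u=1$), while for part (b) you, like the paper, ultimately rest on the citation of Hertweck's Theorem 2.3. Just note that your sketch of the mechanism behind that theorem is a heuristic gloss (the cited proof is a genuinely technical $p$-adic/modular argument) and should not be treated as load-bearing; as a citation-level justification it matches exactly what the paper itself does.
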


Let $\psi$ be a character of the group $G$. A representation afforded by $\psi$ can be extended linearly to a representation of $\QQ G$ and then restricted to a representation $D$ of the group of units $\U(\QQ G)$. We will denote its character also by $\psi$. Now consider for a torsion unit $u \in \V(\ZZ G)$ of order $k$ a linear character $\chi \colon C_k \simeq \langle u \rangle \to \mathbb{C}$ given by $\chi(u) = \zeta^\ell$, with $\zeta \in \mathbb{C}^\times$ a primitive $k$-th root of unity. Then we have that the multiplicity of $\zeta^\ell$ of $D(u)$ is given by \[ \langle \chi, \psi \rangle_{\langle u \rangle} \in \ZZ_{\geq 0} ,\] where $\langle -, = \rangle_{\langle u \rangle}$ denotes the inner product on the class functions of $C_k \simeq \langle u \rangle$. Working out a explicit formula for this, one obtains part a) of the following proposition. 

 \begin{proposition}\label{prop:HeLP-constraints} Let $G$ be a finite group and $u \in \U(\ZZ G)$ a torsion unit of order $k$. Let $\zeta \in \mathbb{C}^\times$ be a primitive $k$-th root of unity. 
 \begin{enumerate}[label=\alph*)]
  \item Let $\chi$ be an ordinary character and let $D$ be a representation afforded by $\chi$. Then the multiplicity of $\zeta^\ell$ as an eigenvalue of $D(u)$ is given by
   \begin{equation} \mu_\ell(u, \chi) = \frac{1}{k}\sum_{d \mid k} \operatorname{Tr}_{\QQ(\zeta^d)/\QQ}(\chi(u^d)\zeta^{-d \ell}). \tag*{\emph{Luthar, Passi} \cite[Theorem 1]{LP89}} \end{equation} 
   \item Let $p$ be a prime not dividing $k$ and fix an isomorphism $\zeta \mapsto \overline{\zeta}$ between the group of $k$-th roots of unity in characteristic $0$ and those in characteristic $p$.
   Let $\varphi$ be a $p$-Brauer character and $P$ be a representation afforded by $\varphi$. Then the multiplicity of $\overline{\zeta}^\ell$ as an eigenvalue of $P(u)$ is given by
   \begin{equation} \mu_\ell(u, \varphi) = \frac{1}{k}\sum_{d \mid k} \operatorname{Tr}_{\QQ(\zeta^d)/\QQ}(\varphi(u^d)\zeta^{-d \ell}). \tag*{\emph{Hertweck} \cite[Section 4]{HertweckBrauer}} \end{equation} 
  \end{enumerate}
 \end{proposition}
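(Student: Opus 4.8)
The plan is to prove both parts by reducing everything to ordinary character theory of the cyclic group $\langle u \rangle \simeq C_k$ and then performing a single Galois-theoretic reorganisation of the resulting sum; parts a) and b) differ only in how the relevant $\mathbb{C}$-valued class function on $C_k$ is produced.

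For part a): since $u$ has order $k$, the matrix $D(u)$ satisfies $D(u)^k = I$, hence is diagonalisable over $\mathbb{C}$ with eigenvalues among the $k$-th roots of unity. Writing $\mu_\ell$ for the multiplicity of $\zeta^\ell$, the restriction $\chi|_{\langle u\rangle}$ decomposes as $\sum_{i=0}^{k-1}\mu_i\lambda_i$, where $\lambda_i$ is the linear character of $\langle u\rangle$ with $\lambda_i(u) = \zeta^i$. Orthonormality of the irreducible characters of $C_k$ then gives
\[ \mu_\ell = \langle \chi|_{\langle u\rangle},\lambda_\ell\rangle_{\langle u\rangle} = \frac{1}{k}\sum_{j=0}^{k-1}\chi(u^j)\,\zeta^{-j\ell}. \]
Now I would partition the index set $\{0,1,\dots,k-1\}$ according to $d = \gcd(j,k)$, which runs over the divisors of $k$; for fixed $d$ write $j = da$ with $a$ ranging over a transversal of $(\ZZ/(k/d)\ZZ)^\times$, checking that $j\mapsto(d,a)$ is a bijection onto all such pairs. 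Since $o(u^d) = k/d$ and $\QQ(\zeta^d) = \QQ(\zeta_{k/d})$, the group $\operatorname{Gal}(\QQ(\zeta^d)/\QQ)$ is exactly $\{\sigma_a : a\in(\ZZ/(k/d)\ZZ)^\times\}$ with $\sigma_a(\zeta^d) = \zeta^{da}$; applying $\sigma_a$ to the eigenvalues of $D(u^d)$ yields the standard identity $\sigma_a(\chi(u^d)) = \chi(u^{da})$, so $\sigma_a\bigl(\chi(u^d)\zeta^{-d\ell}\bigr) = \chi(u^{da})\zeta^{-da\ell}$. Summing over $a$ turns the inner sum into $\operatorname{Tr}_{\QQ(\zeta^d)/\QQ}(\chi(u^d)\zeta^{-d\ell})$, and summing over $d$ recovers the asserted formula.

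For part b) I would run the same argument with the $p$-Brauer character $\varphi$ in place of $\chi$. The preliminary points are: since $p\nmid k$, every power $u^d$ is $p$-regular, so $\varphi(u^d)$ is defined; the fixed isomorphism $\zeta\mapsto\overline\zeta$ restricts to a group isomorphism between the $k$-th roots of unity in characteristic $0$ and in characteristic $p$; and, again because $p\nmid k = |\langle u\rangle|$, Maschke's theorem makes the restriction of $P$ to $\langle u\rangle$ semisimple, so (after reducing $u$ modulo $p$) $P(u)$ is diagonalisable over $\overline{\mathbb{F}_p}$ with $k$-th roots of unity as eigenvalues. By the definition of the Brauer character as the lift of the sum of these eigenvalues, the multiplicity $m_\ell$ of $\overline\zeta^{\,\ell}$ as an eigenvalue of $P(u)$ equals the coefficient of $\lambda_\ell$ in the decomposition of the $\mathbb{C}$-valued class function $\varphi|_{\langle u\rangle}$ on $C_k$, so $m_\ell = \frac{1}{k}\sum_{j=0}^{k-1}\varphi(u^j)\zeta^{-j\ell}$. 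The reorganisation from part a) then applies verbatim, now using $\sigma_a(\varphi(h)) = \varphi(h^a)$ for $p$-regular $h$ and $a$ coprime to $o(h)$, which holds because the lift map commutes with the $a$-th power map on roots of unity.

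The combinatorial bijection $j\leftrightarrow(d,a)$ and the identity $\QQ(\zeta^d) = \QQ(\zeta_{k/d})$ are routine and I expect no trouble there. The step that needs the most care — and the genuinely new ingredient of b) over a) — is making precise that the Brauer character of a representation restricted to a $p'$-subgroup behaves exactly like an ordinary character: that the restriction is semisimple with root-of-unity eigenvalues, that $\varphi$ on this subgroup is literally the pointwise sum of the lifts of those eigenvalues, and that the lifting correspondence intertwines the Galois actions. This is what simultaneously licenses the orthogonality computation of $m_\ell$ and the formula $\sigma_a(\varphi(h)) = \varphi(h^a)$, so that is where I would concentrate the argument.
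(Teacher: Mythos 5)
Your argument is correct and is essentially the proof the paper sketches (and that Luthar--Passi and Hertweck give): identify the multiplicity with the inner product $\langle \psi|_{\langle u\rangle}, \lambda_\ell\rangle_{\langle u\rangle}$ over the cyclic group $\langle u\rangle$ and then regroup the sum over exponents by $d=\gcd(j,k)$ into Galois traces, with part b) reduced to the same computation via the semisimplicity of $P|_{\langle u\rangle}$ and the compatibility of Brauer-character lifting with the Galois action. No gaps; the points you flag as needing care are exactly the ones handled in \cite[Section 4]{HertweckBrauer}.
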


 This proposition is the linchpin of the HeLP-method. Let $u \in \V(\ZZ G)$ be again a torsion unit of order $k$.  For an ordinary character $\chi$ we have $\chi(u) = \sum_{x^G} \varepsilon_x(u) \chi(x)$.  By \cite[Theorem~3.2]{HertweckBrauer} the analogue statement holds for $p$-Brauer characters and $p$-regular units $u$, where the sum is taken only over the $p$-regular conjugacy classes (an element is called \emph{$p$-regular} if its order is not divisible by $p$). Assuming one knows inductively the character values of $u^d$ for all $d \mid k$ with $d \not= 1$, one has for $\psi$, an ordinary or a Brauer character in characteristic $p \nmid k$, and every $\ell$ a condition on the $\varepsilon_x(u)$ as follows:
  \begin{equation}\label{eq:HeLP-constraints}
    \sum_{x^G} \frac{\operatorname{Tr}_{\QQ(\zeta)/\QQ}(\psi(x)\zeta^{-\ell})}{k} \varepsilon_x(u) + a_\ell(u, \psi) \in \ZZ_{\geq 0},
 \end{equation}
 where the $a_\ell(u, \psi) = \frac{1}{k}\sum_{1 \not= d \mid k} \operatorname{Tr}_{\QQ(\zeta^d)/\QQ}(\psi(u^d)\zeta^{-d \ell})$ are assumed to be 'known'. By \cref{prop:pas_0} it is enough to take the sum \eqref{eq:HeLP-constraints} over classes of elements having an order dividing $k$.
 
\begin{remark}
 
\begin{itemize} 
 \item The bounds for the partial augmentations obtained in \cite[Corollary 2.3]{HalesLutharPassi} are encoded in the character table, so they will not add new information to the algorithm.
 \item It is intrinsic in the formula that the $\mu_\ell$'s sum up to the degree of the character.
 \item Note that for any set of partial augmentations fulfilling all the constraints obtained above for all irreducible ordinary characters of $G$ one can explicitly write down a unit in $K G$ having these partial augmentations, for $K$ a splitting field of $G$.
\end{itemize}
\end{remark}

\section{The extended Wagner Test}\label{section:wagner-test}

The program uses also a criterion proved in a special form by Roland Wagner in his Diplomarbeit in 1995. The more general case is given in \cite[Remark 6]{BovdiHertweck} and follows from a well known Lemma, recorded e.g. in \cite[Lemma (7.1)]{SehgalBook2}. For sake of completeness we include a proof. We write $g \sim h$, if $g$ and $h$ are conjugate in a group $G.$

\begin{proposition} Let $G$ be a finite group, $s$ some element in $G$ and $u \in \V(\ZZ G)$, $o(u) = p^j m$ with $p$ a prime and $m \not= 1$. Then $\smashoperator[r]{\sum\limits_{x^G,\ x^{p^j} \sim s}} \varepsilon_x(u) \equiv \varepsilon_{s}(u^{p^j}) \mod p$. \end{proposition}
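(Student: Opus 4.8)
\emph{The plan.} I would first establish the case $j=1$ in the stronger form valid for an arbitrary element $a=\sum_{g\in G}a_g g\in\ZZ G$ in place of the normalized unit $u$, namely
\begin{equation*}
 \varepsilon_s(a^{p}) \equiv \sum_{\substack{y^{G}\\ y^{p}\sim s}} \varepsilon_y(a) \pmod p, \tag{$\ast$}
\end{equation*}
and then bootstrap to arbitrary $j$ by induction on $j$; thus neither the hypothesis $m\neq1$ nor the fact that $u$ is a unit is really needed. For the inductive step, assuming $(\ast)$ and the proposition for $j-1$, and using that $u^{p^{j-1}}\in\ZZ G$, one applies $(\ast)$ to $a=u^{p^{j-1}}$ and then the inductive hypothesis to each resulting term:
\begin{equation*}
 \varepsilon_s\bigl(u^{p^{j}}\bigr) \;\equiv\; \sum_{\substack{y^{G}\\ y^{p}\sim s}}\varepsilon_y\bigl(u^{p^{j-1}}\bigr) \;\equiv\; \sum_{\substack{y^{G}\\ y^{p}\sim s}}\ \sum_{\substack{x^{G}\\ x^{p^{j-1}}\sim y}}\varepsilon_x(u) \pmod p.
\end{equation*}
The last double sum collapses: for a fixed class $x^{G}$ the condition $x^{p^{j-1}}\sim y$ singles out exactly one class, $y^{G}=\bigl(x^{p^{j-1}}\bigr)^{G}$, and for it $y^{p}=x^{p^{j}}$, so $x^{G}$ contributes precisely when $x^{p^{j}}\sim s$; hence the double sum equals $\sum_{x^{p^{j}}\sim s}\varepsilon_x(u)$, the claimed expression.

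\emph{Proving $(\ast)$.} I would expand $a^{p}=\sum_{(g_1,\dots,g_p)\in G^{p}}a_{g_1}\cdots a_{g_p}\,g_1\cdots g_p$ and apply the $\ZZ$-linear functional $\varepsilon_s$. The cyclic group $C_p$ acts on the index set $G^{p}$ by rotating the coordinates; along each orbit the scalar $a_{g_1}\cdots a_{g_p}$ is constant (because $\ZZ$ is commutative) and the value $\varepsilon_s(g_1\cdots g_p)\in\{0,1\}$ is constant (because $g_2\cdots g_pg_1=g_1^{-1}(g_1\cdots g_p)g_1$ is conjugate to $g_1\cdots g_p$). As $p$ is prime, every orbit has size $1$ or $p$; the size-$p$ orbits contribute multiples of $p$, and the size-$1$ orbits are precisely the diagonal tuples $(g,\dots,g)$. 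Reducing modulo $p$ therefore gives
\begin{equation*}
 \varepsilon_s(a^{p}) \;\equiv\; \sum_{g\in G} a_g^{p}\,\varepsilon_s(g^{p}) \;=\; \sum_{\substack{g\in G\\ g^{p}\sim s}} a_g^{p} \;\equiv\; \sum_{\substack{g\in G\\ g^{p}\sim s}} a_g \pmod p,
\end{equation*}
the last step being Fermat's little theorem $a_g^{p}\equiv a_g\pmod p$. Since the condition $g^{p}\sim s$ depends only on the conjugacy class of $g$, grouping the final sum over the classes of $G$ rewrites it as $\sum_{\substack{y^{G}\\ y^{p}\sim s}}\varepsilon_y(a)$, which is $(\ast)$.

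\emph{The main obstacle.} The one point that needs care is the orbit argument for $(\ast)$, and in particular the fact that one must apply $\varepsilon_s$ \emph{before} collecting terms: inside a $C_p$-orbit of size $p$ the group elements $g_1\cdots g_p$ are mutually conjugate but typically pairwise distinct, so $a^{p}$ is \emph{not} congruent to $\sum_{g\in G}a_g^{p}g^{p}$ modulo $p\ZZ G$; the congruence becomes valid only after projecting onto the $\ZZ$-span of the conjugacy-class sums, which is exactly the information recorded by the partial augmentations. Everything else — the induction on $j$, the collapse of the double sum, and Fermat's little theorem — is routine bookkeeping.
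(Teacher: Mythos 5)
Your proof is correct. The engine is the same as the paper's -- expand the power, let a cyclic group act by rotating the coordinates of the index tuples, note that both the coefficient product and the conjugacy condition are rotation-invariant, discard the non-constant orbits modulo $p$, and finish with Fermat's little theorem -- but you organize it differently: the paper applies the action of $C_{p^j}$ to $u^{p^j}$ in one step, using that every orbit of a non-constant tuple has length $p^i$ with $i \geq 1$, whereas you prove only the prime-exponent congruence $(\ast)$ (where orbits have size $1$ or $p$, the cleanest case) and then iterate by induction on $j$, collapsing the resulting double sum over conjugacy classes. Your route buys a more elementary orbit count and makes explicit two points the paper leaves implicit: that rotation preserves the condition $g_1\cdots g_q \sim s$ (via $g_2\cdots g_q g_1 = g_1^{-1}(g_1\cdots g_q)g_1$), and that neither the hypothesis $m \neq 1$ nor the fact that $u$ is a unit is used, so the congruence holds for an arbitrary element of $\ZZ G$; the cost is the extra bookkeeping of the induction and the double-sum collapse, which the paper's one-shot argument avoids (it needs only the iterated Fermat congruence $u_g^{p^j} \equiv u_g \bmod p$). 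Your closing caveat -- that $a^p$ is not congruent to $\sum_g a_g^p\, g^p$ modulo $p\ZZ G$ and the congruence only appears after passing to partial augmentations -- is precisely where a careless version of this argument would go wrong, and you handle it correctly.
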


\begin{proof} Let $u = \sum_{g\in G} u_g g \in\V(\ZZ G),$ set $q = p^j$ and $v = u^q$. Then by definition of the product in the group ring: 
\begin{equation} \varepsilon_s(v) = \sum_{\substack{(g_1, ..., g_q) \in G^q \\ g_1  ...  g_q \sim s}} \prod_{j=1}^q u_{g_j}. \label{vsum} \end{equation}
The set over which the sum is taken can be decomposed into $\mathcal{M} = \{(g, ..., g) \in G^q: g^q \sim s \}$ and $\mathcal{N} = \{ {(g_1, ..., g_q) \in G^q}:  g_1  ...  g_q \sim s \; \text{and} \; \exists \, r, r' : g_r \not= g_{r'}\}$. 

The cyclic group $C_q = \langle t \rangle$ of order $q$ acts on the set $\mathcal{N}$ by letting the generator $t$ shift the entries of a tuple to the left, i.e. $(g_1, g_2, g_3, ..., g_q) \cdot t = (g_2, g_3, ..., g_q, g_1)$. Note that all orbits have length $p^i$ with $i \geq 1$.  For elements in the same orbit, the same integer is summed up in \eqref{vsum}.
Hence using Fermat's little Theorem:
\[\varepsilon_s(v) =  \smashoperator[r]{\sum_{(g, ..., g) \in \mathcal{M}}} u_g^q + \sum_{(g_1, ..., g_q) \in \mathcal{N}} \prod_{j=1}^q u_{g_j} \equiv \smashoperator[r]{\sum_{(g, ..., g) \in \mathcal{M}}} u_g^q \equiv \smashoperator[r]{\sum_{(g, ..., g) \in \mathcal{M}}} u_g \equiv \smashoperator[r]{\sum_{x^G,\ x^{p^j} \sim s}} \varepsilon_x(u) \mod p. \qedhere \] 
\end{proof}

By induction and the Berman-Higman result (\cref{prop:pas_0} \ref{prop:BH} Wagner obtained the following. For units of prime power order the result also follows from \cite[Theorem 4.1]{CL}.
\begin{corollary}[Wagner]
 Let $G$ be a finite group, $u \in \V(\ZZ G)$, $o(u) = p^j m$ with $p$ a prime and $m \not= 1$. Then $\smashoperator[r]{\sum\limits_{x^G,\ o(x) = p^j}} \varepsilon_x(u) \equiv 0 \mod p$. \end{corollary}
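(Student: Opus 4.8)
The plan is to feed the proposition above into the Berman--Higman vanishing result \cref{prop:pas_0}\ref{prop:BH}, applied not only to $u$ but to each of its $p$-power sub-powers. The first step is to note that for the choice $s=1$ the condition $x^{p^i}\sim 1$ means simply $x^{p^i}=1$, i.e.\ $o(x)\mid p^i$, since the identity is alone in its conjugacy class. The proposition applies to $u$ written in the form $o(u)=p^i\cdot(p^{j-i}m)$ for any $0\le i\le j$ — its hypothesis requires only that the cofactor be $\neq 1$, which holds since $m\neq 1$ — and yields
\[ \sum_{\substack{x^G\\ o(x)\mid p^i}}\varepsilon_x(u)\ \equiv\ \varepsilon_1\bigl(u^{p^i}\bigr)\pmod p\qquad(0\le i\le j). \]

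Next I would observe that $o(u^{p^i})=p^{j}m/\gcd(p^{j}m,\,p^{i})=p^{j-i}m\neq 1$ for every $i\le j$, again because $m\neq 1$; hence $u^{p^i}$ is a nontrivial torsion unit and \cref{prop:pas_0}\ref{prop:BH} gives $\varepsilon_1(u^{p^i})=0$. Thus the right-hand side above vanishes for all $i$, and it remains only to pass from ``order dividing $p^j$'' to ``order exactly $p^j$''. For $j=0$ the assertion is just the case $i=0$, namely $\varepsilon_1(u)=0$. For $j\ge 1$ one subtracts the instances $i=j$ and $i=j-1$, using that the classes with $o(x)\mid p^j$ split into those with $o(x)\mid p^{j-1}$ and those with $o(x)=p^j$:
\[ \sum_{\substack{x^G\\ o(x)=p^j}}\varepsilon_x(u)\ =\ \sum_{\substack{x^G\\ o(x)\mid p^j}}\varepsilon_x(u)\ -\ \sum_{\substack{x^G\\ o(x)\mid p^{j-1}}}\varepsilon_x(u)\ \equiv\ 0-0\pmod p. \]
(Equivalently, this last step can be organised as an induction on $i$, presumably the induction the text alludes to.)

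Past the proposition everything is elementary, so there is no serious obstacle; the points deserving a little care are that the proposition really does apply with the possibly non-maximal $p$-power $p^i$ — its statement asks only that the cofactor be $\neq 1$, not that it be prime to $p$ — the order computation for $u^{p^i}$, and the degenerate case $j=0$. It is worth emphasising that the hypothesis $m\neq 1$ is essential here: it is precisely what guarantees $u^{p^i}\neq 1$ for all $i\le j$, so that Berman--Higman can be invoked; the complementary situation of prime-power order is instead covered by \cite[Theorem 4.1]{CL}, as already remarked in the text.
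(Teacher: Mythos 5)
Your proof is correct and follows essentially the route the paper intends (it only sketches ``by induction and the Berman--Higman result''): apply the preceding proposition with $s=1$ and exponents $p^{j-1}$ and $p^j$, kill the right-hand sides via \cref{prop:pas_0}\ref{prop:BH} using $o(u^{p^i})=p^{j-i}m\neq 1$, and subtract to isolate the classes with $o(x)=p^j$. Your observation that the proposition legitimately applies with a non-maximal $p$-power (cofactor only required to be $\neq 1$) is exactly the point that makes this work, so nothing is missing.
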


\begin{example}
Let $G$ be the Mathieu group of degree 11. There exists only one conjugacy class of involutions in $G$, call it $\cc{2a}$. After applying HeLP (i.e.\ \cref{prop:HeLP-constraints}) for a unit $u$ of order 12 in $\V(\ZZ G)$ one obtains two possible partial augmentations for $u$. One of these possibilities satisfies $\varepsilon_{\cc{2a}}(u) = 1$ while the other satisfies $\varepsilon_{\cc{2a}}(u) = -1$  \cite{KonovalovM11}. Both possibilities however do not fulfill the constraints of Wagner's result and thus there are no torsion units of order 12 in $\V(\ZZ G)$ and the order of any torsion unit in $\V(\ZZ G)$ coincides with the order of an element in $G.$ 
\end{example}

\section{Implementation}\label{section:implementation}

\subsection{Further results}
We used several results in our implementation, which are not consequences of the HeLP-method and which we list here. The first one is a direct consequence of the Fong-Swan-Rukolaine Theorem \cite[Theorem 22.1]{CR1}.

\begin{proposition}
Let $G$ be a $p$-solvable group and $u \in \V(\ZZ G)$ a torsion unit of order prime to $p$. Then the restrictions on the possible partial augmentations of $u$ one can obtain using the $p$-Brauer table of $G$ are the same as when using the ordinary character table of $G.$
\end{proposition}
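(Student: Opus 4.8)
The plan is to use that, for each fixed $\ell$, the left-hand side of the HeLP constraint \eqref{eq:HeLP-constraints} is a $\QQ$-linear function of the character $\psi$, and to combine this with two classical facts about $p$-solvable groups: all decomposition numbers $d_{\chi\varphi}$ are non-negative integers, and (Fong--Swan--Rukolaine) every irreducible $p$-Brauer character of $G$ is the restriction to the $p$-regular classes of an ordinary irreducible character.

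First I would observe that only $p$-regular classes are relevant. Since $k = o(u)$ is prime to $p$, so is $o(u^d) = k/\gcd(k,d)$ for every $d \mid k$; and by \cref{prop:pas_0}\,\ref{prop:Hert_pas0} we have $\varepsilon_x(u) = 0$ whenever $o(x) \nmid k$, so in \eqref{eq:HeLP-constraints} only classes $x^G$ of $p$-regular elements contribute. Hence, writing $\psi^\ast$ for the restriction of a class function $\psi$ to the $p$-regular classes, the constraint attached to an ordinary character $\chi$ depends only on $\chi^\ast$: both the coefficients $\operatorname{Tr}_{\QQ(\zeta)/\QQ}(\chi(x)\zeta^{-\ell})/k$ of the $\varepsilon_x(u)$ and the constant $a_\ell(u,\chi)$ (which only involves the values $\chi(u^d)$ at $p$-regular elements) are unchanged if $\chi$ is replaced by any other class function with the same restriction $\chi^\ast$. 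In particular, when $\chi^\ast$ happens to be an irreducible $p$-Brauer character $\varphi$, the constraint from $\chi$ and the constraint from $\varphi$ are literally the same inequality.

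Next, denote by $L_\ell(\psi)$ the left-hand side of \eqref{eq:HeLP-constraints}, viewed as a function of a class function $\psi$ on the $p$-regular classes; the HeLP condition is $L_\ell(\psi) \in \ZZ_{\geq 0}$. Because the field traces are $\QQ$-linear, $\psi \mapsto L_\ell(\psi)$ is $\QQ$-linear. On the $p$-regular classes one has $\chi^\ast = \sum_\varphi d_{\chi\varphi}\,\varphi$ for every ordinary irreducible $\chi$, the sum running over the irreducible $p$-Brauer characters; hence $L_\ell(\chi) = \sum_\varphi d_{\chi\varphi}\, L_\ell(\varphi)$. Since the $d_{\chi\varphi}$ are non-negative integers, any tuple $(\varepsilon_x(u))_x$ for which all Brauer HeLP constraints hold automatically satisfies all ordinary ones. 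For the converse, Fong--Swan--Rukolaine gives, for each irreducible $p$-Brauer character $\varphi$, an ordinary irreducible $\chi$ with $\chi^\ast = \varphi$, and then $L_\ell(\varphi) = L_\ell(\chi)$ by the previous paragraph; so any tuple satisfying all ordinary HeLP constraints also satisfies all Brauer ones. Thus the two systems of inequalities cut out exactly the same set of partial augmentation tuples, which is the assertion.

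There is no genuine obstacle here: the whole argument is linear algebra once the two structural inputs are granted, and the only point requiring care is the bookkeeping of which conjugacy classes enter the sums in \eqref{eq:HeLP-constraints} — which is handled by \cref{prop:pas_0}\,\ref{prop:Hert_pas0} together with the hypothesis $p \nmid o(u)$.
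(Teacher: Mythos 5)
Your argument is correct and is essentially the paper's own justification spelled out in detail: the paper proves this proposition simply by citing the Fong--Swan--Rukolaine theorem, and your elaboration (linearity of the HeLP constraints in the character, non-negativity of decomposition numbers for one direction, liftability of irreducible $p$-Brauer characters of $p$-solvable groups for the other, with \cref{prop:pas_0} restricting everything to $p$-regular classes) is exactly the intended reasoning.
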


Often when applying the HeLP-method the inequalities one has to solve get very messy even for groups where e.g.\ the Zassenhaus Conjecture is known for a long time. As an example one might think of cyclic groups. For that reason the functions testing (ZC) and (PQ) in our implementation also use the following results instead of solving any inequalities in these situations:
\begin{proposition}\label{Weiss}
\begin{enumerate}[label=\alph*)]
\item[a)] (ZC) holds for nilpotent groups. \hfill Weiss \cite{Weiss91}
\item[b)] (PQ) has an affirmative answer for solvable groups. \hfill Kimmerle \cite{KimmiPQ}
\end{enumerate}
\end{proposition}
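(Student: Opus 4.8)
Since the two assertions have completely different natures, I would treat them separately, and I would dispatch part (b) first because it is essentially free given what is already in place. If $G$ is solvable and $u\in\V(\ZZ G)$ is a torsion unit of order $pq$ with $p\neq q$ primes, then \cref{prop:orders} b) (Hertweck's theorem for solvable groups) tells us that $o(u)$ is the order of some $g\in G$; hence $g$ has order $pq$ and (PQ) is answered affirmatively. A more self-contained route, closer in spirit to Kimmerle's original, would run an induction along a chief series of $G$, feeding in at each step the constraints of \cref{prop:HeLP-constraints} together with the vanishing results of \cref{prop:pas_0}; but once \cref{prop:orders} b) is available, the one-line reduction above is the cleanest.

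Part (a) is Weiss's theorem, and no short argument can replace it, so my ``plan'' here is really to recall the architecture and defer the substance to \cite{Weiss88,Weiss91}. The first move is a localization and reduction step: for nilpotent $G$ one writes $G$ as the direct product of its Sylow subgroups and argues one prime at a time. Fixing $p\mid|G|$ and writing $G=P\times H$ with $P$ the Sylow $p$-subgroup and $H$ a $p'$-group, the order $\ZZ_{(p)}H$ is maximal (because $|H|$ is invertible there), so the $H$-factor carries no rigidity of its own and the problem localizes to the assertion that a torsion unit of $p$-power order in $\V(\ZZ_{(p)}P)$ is rationally conjugate to an element of $P$. Carrying this out for every $p$ and then patching the local information together, most conveniently at the level of partial augmentations via the criterion of Marciniak, Ritter, Sehgal and Weiss quoted above (which turns having the correct partial augmentations everywhere into an actual rational conjugation), yields (ZC) for $G$.

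The heart of the matter is therefore the $p$-group case, and there I would invoke Weiss's rigidity theorem for $p$-adic group-ring lattices, in the form relevant here: over a finite $p$-group $P$, a $\ZZ_p P$-lattice whose restriction to the cyclic group generated by a torsion unit $u$ is a permutation lattice is already a permutation lattice over $\ZZ_p P$; applied to the regular module this forces $\langle u\rangle$ to be conjugate, by a unit of $\ZZ_p P$, to a subgroup of $P$, which gives (ZC) for $p$-groups. I expect essentially the entire difficulty of part (a) to reside in this lattice-theoretic rigidity statement, the localization bookkeeping and the patching being comparatively routine, whereas part (b) presents no real obstacle at all, the work having been discharged into Hertweck's theorem.
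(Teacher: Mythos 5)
The paper itself gives no proof of this proposition: both parts are quoted results, (a) from Weiss and (b) from Kimmerle, so there is no internal argument to compare with, and your proposal can only be judged on its own terms. Your handling of (b) is correct and is genuinely a different route from the cited one: Hertweck's theorem in \cref{prop:orders}~b) produces an element of $G$ of order exactly $pq$ whenever $\V(\ZZ G)$ contains one, which is stronger than what (PQ) asks, so the one-line reduction works. What this buys is brevity; what it costs is that all the substance is outsourced to \cref{prop:orders}~b), itself a deep theorem, whereas Kimmerle's proof is independent of and predates Hertweck's result (which is why the paper cites it separately); your alternative ``induction along a chief series with HeLP constraints'' is too vague to count as an argument and is not how Kimmerle proceeds.

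For (a) you rightly concede that nothing short of Weiss's work will do, but two points in your sketch are less routine than you present them. First, the rigidity statement is misquoted: it is false that a $\ZZ_p P$-lattice whose restriction to the cyclic group generated by a torsion unit is a permutation lattice must be a permutation lattice (restriction to a small subgroup being permutation is far too weak a hypothesis); Weiss's theorem concerns a lattice $M$ over a finite $p$-group with a normal subgroup $N$ such that $M$ restricted to $N$ is free and the fixed points $M^N$ form a permutation module for the quotient, and the application to torsion units runs through viewing $\ZZ_p P$ as a lattice over $P \times \langle u \rangle$ (a bimodule argument), not through restriction to $\langle u \rangle$ alone. Second, the passage from $p$-groups to nilpotent groups is not mere ``localization bookkeeping'': handling the $p'$-factor correctly is precisely why \cite{Weiss91} is a separate and substantial paper beyond the $p$-group case of \cite{Weiss88}. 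Since you explicitly defer the substance to those references, these are defects of the sketch rather than fatal gaps, but as written the sketch would not reconstruct Weiss's proof.
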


\begin{remark}
The above results do of cause not cover all knowledge on (ZC) and (PQ). E.g. (ZC) is known for cyclic-by-abelian groups \cite{CyclicByAbelian}, while (PQ) is known for the groups $\operatorname{PSL}(2,p)$ where $p$ denotes a prime \cite{HertweckBrauer}. However we decided for simplicity only to use the results above in the first version of the package.
\end{remark}

\subsection{Main functions of the HeLP-package}

The function \texttt{HeLP\_ZC} checks whether (ZC) can be verified using the character tables and Brauer tables available in \textsf{GAP}. For an element $u$ of order $k$ and $p$ and $q$ different prime divisors of $k$ we call partial augmentations of $u^p$ and $u^q$ \emph{compatible}, if $(u^p)^q$ and $(u^q)^p$ have the same partial augmentations.

\begin{algorithm}[H]%
 \KwData{group or ordinary character table of a group}
 \KwResult{\texttt{true} or \texttt{false}}
 \If{$G$ nilpotent}{\texttt{return true} (See \cref{Weiss})}
 \eIf{$G$ solvable}{\texttt{OrdersToCheck :=} Orders of elements in $G$ (See \cref{prop:orders})}{\texttt{OrdersToCheck :=} Divisors of $\exp G$} 
 \For{$k = o(u)$ in \texttt{OrdersToCheck}}{
   \For{all prime divisors $p$ of $o(u)$ and all possible partial augmentations of $u^p$}{
   \If{partial augmentations are compatible}{
     Construct the HeLP-systems for all relevant character tables and find its solutions
    }
  }
  Apply the Wagner test for order $k$\\
  Save the resulting possibilities for partial augmentations of units of order $k$ in the global variable \texttt{HeLP\_sol}
 }
 \texttt{return} only 'trivial' partial augmentations are admissible
 \caption{HeLP\_ZC}
\end{algorithm}

The function \texttt{HeLP\_PQ} checks whether (PQ) can be verified using the character tables and Brauer tables available in \textsf{GAP}. It works in a similar way as \texttt{HeLP\_ZC} but only checks the orders relevant for the Prime Graph Question.

There are many more functions available in the package implementing the HeLP-method for more specific checks, e.g.\ allowing to check a specific order of torsion units, cf.\ the reference manual of the package.

\subsection{Non-standard characters} Unfortunately not all character and Brauer tables known are yet readily available in \textsf{GAP} and the package can not be immediately applied in that case. However there are some workarounds.

\begin{example} The Brauer table modulo $7$ of $\operatorname{PSL}(2,49)$ is known generically, but not yet included in the \textsf{GAP} Character Table Library \cite{CTblLib}. However our implementation allows to use any class function of a group which may be entered e.g.\ manually. In this way also induced characters or other kind of class functions may be used and it is, among other things, possible to prove (ZC) for $\operatorname{PSL}(2,49)$. \end{example}

\begin{example}
Let $G$ be the projective unitary group $\operatorname{PSU}(3,8)$ and $A$ its automorphism group. Assume the goal is to check (PQ) for $A$. To obtain that one needs to exclude the existence of units of order $2\cdot 19$ and $7\cdot 19$ in $\V(\ZZ A).$ The character table of $G$ is available in \textsf{GAP} while the one of $A$ is not. However inducing the second and third irreducible character of $G$ to characters of $A$ one obtains two characters of $A$. The HeLP-constrains following from these two characters are already strong enough to prove (PQ) for $A$. 
\end{example}

\subsection{Solving the inequalities} Applying the HeLP-method involves solving the integral linear inequalities described after \cref{prop:HeLP-constraints}. This is a hard task in general and though always doable in theory this may take a lot of time when there are many inequalities and variables involved. A good solver of such systems is the main ingredient here. Our implementation allows to use two different solvers: The software system 4ti2 \cite{4ti2} and/or the system Normaliz \cite{Normaliz, NormalizArticle}. We chose those solvers for two reasons: They are good solvers and there exist \textsf{GAP}-Interfaces for them.  We could also include an other solvers as soon as a \textsf{GAP}-Interface will be available for it. To reduce the size of the system that has to be solved the package uses 'redund' from the lrslib software \cite{lrslib}. When using 4ti2, in many cases this leads to a remarkable speed up, sometimes however this also slows the calculations down, so that there is an option implemented to switch the use of 'redund' on and off.

\subsection{$p$-constant characters} If one is interested especially in solving (PQ) there is often a way to reduce the system one has to solve introduced by V. Bovdi and A. Konovalov in \cite{BKHS}. Assume one is studying the possible partial augmentations of units of order $p\cdot q$, where $p$ and $q$ are different primes. Let $\chi$ be a character which is constant on all conjugacy classes of elements of order $p$, a so called \emph{$p$-constant character}. Then the coefficients appearing in the HeLP-constrains provided by $\chi$ at partial augmentations of elements order $p$ are always the same. Thus one can reduce the number of variables involved by replacing all the partial augmentations of elements of order $p$ by their sum. This way one also does not need to know the partial augmentations of elements of order $p$ - their sum is 1 in any case. Often it suffices to study only $p$-constant characters to exclude the possibility of existence of units of order $p\cdot q$ and this functionality is also provided by the package.

\bibliographystyle{amsalpha}
\bibliography{help.bib}

\vspace{.25cm}
\begin{center}  The package is available at \url{http://homepages.vub.ac.be/abachle/help/}.\end{center}
 \vspace{.25cm}
 
\end{document}